\documentclass[twoside]{article}
\usepackage{amsmath,amssymb,amsthm}
\numberwithin{equation}{section}

\oddsidemargin	= -0.5 cm
\evensidemargin	= -0.5 cm
\textwidth		= 17.0 cm
\textheight		= 22.0 cm

\theoremstyle{plain}
\newtheorem{thm}{Theorem}[section]
\newtheorem{lem}[thm]{Lemma}
\newtheorem{cor}[thm]{Corollary}

\theoremstyle{remark}
\newtheorem{remark}[thm]{Remark}

\newenvironment{ackn}{\subsection*{Acknowledgement}}{}
\newenvironment{keywords}{\smallskip\noindent{\bfseries Keywords:}}{}
\newenvironment{MSC}{\smallskip\noindent{\bfseries 2000 Mathematical Subject Classification:}}{}

\DeclareMathOperator{\per}{per} 
\DeclareMathOperator{\rank}{rk} 
\DeclareMathOperator{\Sym}{Sym} 

\newcommand{\Z}{\mathbb{Z}} 
\newcommand{\C}{\mathbb{C}} 
\newcommand{\gl}{\mathfrak{gl}} 

\newcommand{\Ugl}[1][n]{\mathcal{U}(\mathfrak{gl}_{#1})} 
\newcommand{\Uq}[1][2]{\mathcal{U}_q(\mathfrak{sl}_{#1})} 
\newcommand{\A}[1][2]{\mathcal{A}(\operatorname{Mat}_{#1})} 
\newcommand{\Aq}[1][2]{\mathcal{A}_q(\operatorname{Mat}_{#1})} 
\newcommand{\Uqn}[1][n]{\mathcal{U}_q(\mathfrak{gl}_{#1})} 

\newcommand{\GLmod}[2][2]{\mathcal{M}_{#1}^{#2}} 
\newcommand{\sym}[1]{\mathfrak{S}_{#1}} 

\newcommand{\SLmod}[1]{\mathcal{M}_q(#1)} 
\newcommand{\Cmod}[2][\alpha]{V_q^{#2}(#1)} 

\newcommand{\qdet}[1][q]{\operatorname{det}_{#1}} 
\newcommand{\qper}[1][q]{\operatorname{per}_{#1}} 
\newcommand{\adet}[1][\alpha]{\operatorname{det}^{(#1)}} 
\newcommand{\qadet}[1][\alpha]{\operatorname{det}_q^{(#1)}} 
\newcommand{\len}[1]{\ell(#1)} 
\newcommand{\inv}[1]{\ell(#1)} 

\newcommand{\mixhgf}[6][q]{\Phi\!\left(\genfrac{}{}{0pt}{}{#2}{#3}\,;\,\genfrac{}{}{0pt}{}{#4}{#5}\,;\,#1\,;\,#6\right)} 
\newcommand{\genhgf}[5]{\mathchoice{{}_{#1}F_{#2}\!\left(\genfrac{}{}{0pt}{}{#3}{#4}\,;\,#5\right)}{{}_{#1}F_{#2}(#3;#4;#5)}{}{}} 

\newcommand{\qbinom}[3][q]{\genfrac{[}{]}{0pt}{}{#2}{#3}_{#1}} 
\newcommand{\q}[2][q]{\left[#2\right]_{#1}} 
\newcommand{\ps}[2]{\left(#1;#2\right)} 
\newcommand{\qps}[3][q]{\left(#2;#3\right)_{#1}} 

\newcommand{\kakko}[1]{\left(#1\right)} 
\newcommand{\ckakko}[1]{\left\{#1\right\}} 
\newcommand{\eqsp}{\phantom{{}={}}}

\pagestyle{myheadings}
\markboth{K. KIMOTO}
{Quantum alpha-determinants and $q$-deformed hypergeometric polynomials}

\begin{document}

\title{\bfseries Quantum alpha-determinants\\ and $q$-deformed hypergeometric polynomials}
\author{Kazufumi Kimoto}
\date{February 26, 2009}

\maketitle

\begin{abstract}
The quantum $\alpha$-determinant is defined as a parametric deformation of the quantum determinant.
We investigate the cyclic $\mathcal{U}_q(\mathfrak{sl}_2)$-submodules
of the quantum matrix algebra $\mathcal{A}_q(\mathrm{Mat}_2)$ generated by
the powers of the quantum $\alpha$-determinant.
For such a cyclic module, there exists a collection of polynomials
which describe the irreducible decomposition of it in the following manner:
(i) each polynomial corresponds to a certain irreducible $\mathcal{U}_q(\mathfrak{sl}_2)$-module,
(ii) the cyclic module contains an irreducible submodule if the parameter is a root of the corresponding polynomial.
These polynomials are given as a $q$-deformation of the hypergeometric polynomials.
This is a quantum analogue of the result obtained in our previous work
[K. Kimoto, S. Matsumoto and M. Wakayama,
\textit{Alpha-determinant cyclic modules and Jacobi polynomials},
to appear in Trans. Amer. Math. Soc.].

\begin{keywords}
Quantum groups, quantum alpha-determinant, cyclic modules, irreducible decomposition,
hypergeometric polynomials, $q$-analogue.
\end{keywords}

\begin{MSC}
20G42, 
33C20. 
\end{MSC}
\end{abstract}

\section{Introduction}

The \emph{$\alpha$-determinant} is a common generalization of the determinant and permanent defined by
\begin{equation*}
\adet(X)=\sum_{\sigma\in\sym n}\alpha^{\nu(\sigma)}x_{\sigma(1)1}x_{\sigma(2)2}\dotsb x_{\sigma(n)n},
\end{equation*}
where $\alpha$ is a complex parameter and $\nu(\sigma)=n-(m_1+m_2+\dotsb+m_n)$
if the cycle type of $\sigma\in\sym n$ is $1^{m_1}2^{m_2}\dotsb n^{m_n}$ \cite{VereJones}.
By definition, the $\alpha$-determinant $\adet(X)$ agrees with the determinant $\det(X)$ when $\alpha=-1$
and with the permanent $\per(X)$ when $\alpha=1$.
In other words, the $\alpha$-determinant interpolates these two.

We recall an invariant property of the determinant and permanent in the following representation-theoretical context.
Let $\A[n]$ be the $\C$-algebra of polynomials in the $n^2$ commuting variables $\{x_{ij}\}_{1\le i,j\le n}$,
and $\Ugl$ the universal enveloping algebra of the Lie algebra $\gl_n=\gl_n(\C)$.
By defining
\begin{align*}
E_{ij}\cdot f=\sum_{r=1}^n x_{ir}\frac{\partial f}{\partial x_{jr}}\quad(f\in\A[n]),
\end{align*}
$\A[n]$ becomes a $\Ugl$-module.
Here $\{E_{ij}\}_{1\le i,j\le n}$ is the standard basis of $\gl_n$.
Then, both of the determinant $\det(X)$ and the permanent $\per(X)$ generate irreducible $\Ugl$-submodules of $\A[n]$.
In fact, the cyclic submodules $\Ugl\cdot\det(X)$ and $\Ugl\cdot\per(X)$ are equivalent to
the skew-symmetric tensor product $\wedge^n(\C^n)$ and symmetric tensor product $\Sym^n(\C^n)$
of the natural representation $\C^n$ respectively, which are irreducible.

In view of this fact,
it is natural and interesting to study the irreducible decomposition
of the cyclic submodule $\Ugl\cdot\adet(X)$, or more generally $\Ugl\cdot\adet(X)^m$.
Matsumoto and Wakayama \cite{MW} tackled this problem first
and obtained explicit irreducible decomposition of $\Ugl\cdot\adet(X)$,
and recently Matsumoto, Wakayama and the author investigated the general case $\Ugl\cdot\adet(X)^m$ \cite{KMW:TAMS};
It is proved that
\begin{align*}
\Ugl\cdot\adet(X)^m\cong\bigoplus_{\substack{\lambda\vdash mn\\ \len\lambda\le n}}\kakko{\GLmod[n]\lambda}^{\oplus \rank F_{n,m}^\lambda(\alpha)}
\end{align*}
holds for certain square matrices $F_{n,m}^\lambda(\alpha)$ whose entries are polynomials in $\alpha$.
In this direct sum, $\lambda$ runs over the partitions of $mn$ whose length is at most $n$.
Here we identify the dominant integral weights and partitions,
and denote by $\GLmod[n]\lambda$ the irreducible representation of $\Ugl$ with highest weight $\lambda$.
Remark that the matrices $F_{n,m}^\lambda(\alpha)$ are determined up to conjugacy and non-zero scalar factor.
In the particular case where $m=1$, we explicitly have
$F_{n,1}^\lambda(\alpha)=f_\lambda(\alpha)I$,
where $I$ is the identity matrix and $f_\lambda(\alpha)$ is the (modified) content polynomial for $\lambda$ \cite{MW}.
It seems quite difficult to obtain an explicit expression for $F_{n,m}^\lambda(\alpha)$ in general.
However, when $n=2$, all the matrices $F_{2,m}^\lambda(\alpha)$ are \emph{one by one}, and they are explicitly given by
\begin{equation}\label{eq:classical_transition_polynomials}
F_{2,m}^{(m+s,m-s)}(\alpha)=(1+\alpha)^s\genhgf21{s-m,s+1}{-m}{-\alpha}\qquad(s=0,1,\dots,m),
\end{equation}
where $\genhgf21{a,b}cx$ is the \emph{Gaussian hypergeometric function} \cite{KMW:TAMS}.

These problems can be also formulated in the framework of \emph{quantum groups}.
Namely, we define a quantum counterpart of the $\alpha$-determinant, which we call \emph{quantum $\alpha$-determinant}, by
\begin{equation}\label{eq:def_of_qadet}
\qadet=\sum_{\sigma\in\sym n}\alpha^{\nu(\sigma)}q^{\inv\sigma}x_{\sigma(1)1}x_{\sigma(2)2}\dotsb x_{\sigma(n)n}
\end{equation}
in the \emph{quantum matrix algebra} $\Aq[n]$ \cite{RTF1990}.
Here $\inv\sigma$ denotes the inversion number of a permutation $\sigma$.
This agrees with the quantum determinant when $\alpha=-1$.
We then introduce a $\Uqn$-module structure on it,
where $\Uqn$ is the quantum enveloping algebra of $\gl_n$ \cite{Jimbo1986},
and consider the cyclic module $\Uqn\cdot(\qadet)^m$.
In \cite{KWquantum}, we study the case where $m=1$.
In contrast to the classical case, the structure of the cyclic module is much more complicated,
so that we have only obtained several less explicit results.

Nevertheless, we can establish a quantum version of the result \eqref{eq:classical_transition_polynomials} \emph{rather explicitly},
and this is the aim of the present article.
We investigate the cyclic $\Uq$-submodule (instead of $\Uqn[2]$-submodule just for simplicity of the description) of $\Aq$ defined by
\begin{align*}
\Cmod m=\Uq\cdot(\qadet)^m.
\end{align*}
We prove that there exists a collection of polynomials $F_{m,j}(\alpha)$ $(j=0,1,\dots,m)$ such that
\begin{equation}
\Cmod m\cong\bigoplus_{\substack{0\le j\le m\\ F_{m,j}(\alpha)\ne0}}\SLmod{2j+1},
\end{equation}
where $\SLmod d$ is the $d$-dimensional irreducible representation of $\Uq$ (Theorem \ref{thm:irred_decomp}),
and show that the polynomials $F_{m,j}(\alpha)$ are written in terms of
a certain $q$-deformation of the hypergeometric polynomials (Theorem \ref{thm:quantum_main}).
Taking a limit $q\to1$, we also obtain the formula \eqref{eq:classical_transition_polynomials} again (Corollary \ref{cor:classical}).

\section{Preliminaries}

We first fix the convention on quantum groups.
We basically follow to \cite{Jimbo1986}, \cite{NYM1993} and \cite{RTF1990}, but modify slightly.

Let $q$ be an indeterminate.
We always discuss over the rational function field $\C(q)$.
The quantum enveloping algebra $\Uq$ is an associative algebra generated by
$k$, $k^{-1}$, $e$, $f$ with the fundamental relations
\begin{align*}
kk^{-1}=k^{-1}k=1,\qquad
kek^{-1}=q^2e,\qquad
kfk^{-1}=q^{-2}f,\qquad
ef-fe=\frac{k-k^{-1}}{q-q^{-1}}.
\end{align*}
$\Uq$ has a (coassociative) coproduct
\begin{align*}
\Delta(k^{\pm1})&=k^{\pm1}\otimes k^{\pm1},\\
\Delta(e)&=e\otimes1+k\otimes e,\\
\Delta(f)&=f\otimes k^{-1}+1\otimes f,
\end{align*}
which enables us to define tensor products of $\Uq$-modules.

The quantum matrix algebra $\Aq$ is an associative algebra
generated by $x_{11},x_{12},x_{21},x_{22}$ with the fundamental relations
\begin{equation}\label{eq:fundamental_relations}
\begin{split}
x_{11}x_{12}&=qx_{12}x_{11},\qquad
x_{21}x_{22}=qx_{22}x_{21},\\
x_{11}x_{21}&=qx_{21}x_{11},\qquad
x_{12}x_{22}=qx_{22}x_{12},\\
x_{12}x_{21}&=x_{21}x_{12},\qquad
x_{11}x_{22}-x_{22}x_{11}=(q-q^{-1})x_{12}x_{21}.
\end{split}
\end{equation}
For convenience, we put
\begin{equation}
z_1=x_{11}x_{22},\quad z_2=x_{12}x_{21}.
\end{equation}
The point is that they \emph{commute}:
\begin{align*}
z_1z_2=z_2z_1.
\end{align*}
The quantum $\alpha$-determinant of size two is then
\begin{equation}
\qadet=x_{11}x_{22}+\alpha qx_{12}x_{21}=z_1+\alpha qz_2.
\end{equation}

\begin{remark}
The quantum $\alpha$-determinant of size two interpolates
the quantum counterparts of the determinant and permanent:
\begin{align*}
\qdet=x_{11}x_{22}-qx_{12}x_{21}=\qadet[-1],\quad
\qper=x_{11}x_{22}+q^{-1}x_{12}x_{21}=\qadet[q^{-2}].
\end{align*}
However, the quantum $\alpha$-determinant of size $n$ does not coincide with
the quantum permanent for any $\alpha$ if $n\ge3$.
This is because $\nu(\cdot)$ is a class function on $\sym n$ in general,
whereas the inversion number $\inv\cdot$ is \emph{not} if $n\ge3$.
\end{remark}

The algebra $\Aq$ becomes a $\Uq$-module by
\begin{equation}\label{eq:module_structure}
\begin{split}
&k^{\pm1}\cdot x_{i1}=q^{\pm1}x_{i1},\qquad
e\cdot x_{i1}=0,\qquad
f\cdot x_{i1}=x_{i2},\\
&k^{\pm1}\cdot x_{i2}=q^{\mp1}x_{i2},\qquad
e\cdot x_{i2}=x_{i1},\qquad
f\cdot x_{i2}=0
\end{split}\qquad\quad(i=1,2).
\end{equation}
These are compatible with the fundamental relations \eqref{eq:fundamental_relations} above.
Our main object is the cyclic submodule of $\Aq$ given by
\begin{equation}
\Cmod m=\Uq\cdot\kakko{\qadet}^m.
\end{equation}

We denote by $\SLmod d$ the $d$-dimensional irreducible representation of $\Uq$.
Notice that
\begin{equation}
\Uq\cdot(x_{11}x_{21})^s\qdet^{m-s}\cong\Uq\cdot(x_{11}x_{21})^s \cong \SLmod{2s+1}.
\end{equation}

Define $q$-analogues of numbers, factorials and binomial coefficients by
\begin{align*}
\q n:=\frac{q^n-q^{-n}}{q-q^{-1}},\qquad
\q n!:=\prod_{i=1}^n\q i,\qquad
\qbinom nk:=\frac{\q n!}{\q k!\q{n-k}!}.
\end{align*}
We recall two well-known identities involved with $q$-binomial coefficients which we will use later.
\begin{itemize}
\item $q$-binomial theorem:
\begin{equation}\label{eq:q-binom-thm}
\prod_{i=1}^n(x+yq^{2i})=\sum_{r=0}^n q^{(n-r)(r+1)}\qbinom nr x^ry^{n-r}.
\end{equation}
\item $q$-Chu-Vandermonde formula:
\begin{equation}\label{eq:q-CV}
\sum_{r=0}^nq^{-r(x+y)}\qbinom x{n-r}\qbinom yr=q^{-ny}\qbinom{x+y}n.
\end{equation}
\end{itemize}

\section{Cyclic modules generated by the quantum alpha-determinant}

\subsection{Some lemmas}

\begin{lem}\label{lem:f-action}
\begin{equation}
f^j\cdot(x_{11}x_{21})^j=q^{-j(j-1)/2}\q{j}!\sum_{r=0}^j q^{-r^2}\qbinom jr^2x_{11}^{j-r}x_{22}^{j-r}(x_{12}x_{21})^r.
\end{equation}
\end{lem}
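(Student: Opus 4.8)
The plan is to exploit that $\Aq$ is a $\Uq$-module algebra. From $\Delta(f)=f\otimes k^{-1}+1\otimes f$ the operator $f$ acts as a twisted derivation,
\begin{equation*}
f\cdot(UV)=(f\cdot U)(k^{-1}\cdot V)+U\,(f\cdot V),
\end{equation*}
while $k^{-1}$ is the algebra automorphism sending $x_{i1}\mapsto q^{-1}x_{i1}$ and $x_{i2}\mapsto qx_{i2}$. First I would reorder the generator by means of $x_{11}x_{21}=qx_{21}x_{11}$, obtaining $(x_{11}x_{21})^j=q^{-j(j-1)/2}x_{11}^jx_{21}^j$; this already produces the scalar $q^{-j(j-1)/2}$ appearing in front, so it remains to evaluate $f^j\cdot(x_{11}^jx_{21}^j)$.

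Second, I would establish the single-column formula
\begin{equation*}
f^i\cdot x_{11}^a=q^{-i(a-i)}\frac{\q{a}!}{\q{a-i}!}\,x_{11}^{a-i}x_{12}^i,
\end{equation*}
together with the identical one obtained by replacing $x_{11},x_{12}$ with $x_{21},x_{22}$. Both follow by induction on $i$ from the twisted derivation rule, the vanishing $f\cdot x_{i2}=0$, and the commutation $x_{11}x_{12}=qx_{12}x_{11}$ (resp.\ $x_{21}x_{22}=qx_{22}x_{21}$); the inductive step is a short manipulation of $q$-integers reproducing the factor $q^{-i(a-i)}\q{a}!/\q{a-i}!$.

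Third, I would split $f^j$ over the product $x_{11}^j\cdot x_{21}^j$ by means of the coproduct of $f^j$. Since the relations give $k^{-1}f=q^2fk^{-1}$, the elements $f\otimes k^{-1}$ and $1\otimes f$ are $q$-commuting, so the (noncommutative) $q$-binomial theorem yields
\begin{equation*}
\Delta(f^j)=\sum_{i=0}^j q^{-i(j-i)}\qbinom ji f^{j-i}\otimes k^{-(j-i)}f^i.
\end{equation*}
Feeding in the single-column formulas and applying $k^{-(j-i)}$ produces, for each $i$, a multiple of the monomial $x_{11}^ix_{12}^{j-i}x_{21}^{j-i}x_{22}^i$. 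I would finally bring this to the normal form of the statement: since $x_{12}$ and $x_{21}$ commute one has $x_{12}^{j-i}x_{21}^{j-i}=(x_{12}x_{21})^{j-i}$, while moving the block $x_{22}^i$ to the left past $x_{12}^{j-i}x_{21}^{j-i}$ via $x_{12}x_{22}=qx_{22}x_{12}$ and $x_{21}x_{22}=qx_{22}x_{21}$ contributes $q^{2i(j-i)}$, leaving $x_{11}^ix_{22}^i(x_{12}x_{21})^{j-i}$. Setting $r=j-i$ and collecting the $q$-powers, the coproduct power $q^{-i(j-i)}$, the two single-column powers, the $k^{-(j-i)}$-twist, and the reordering power $q^{2i(j-i)}$ collapse to $q^{-r^2}$, and the $q$-integer factors assemble into $\q{j}!\,\qbinom jr^2$, which is exactly the claimed coefficient.

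The main obstacle is the bookkeeping of these several independent $q$-powers and checking that they telescope to $q^{-r^2}$; in particular the ordering must be kept honest, since $x_{11}$ and $x_{22}$ do \emph{not} commute (so $x_{11}^{j-r}x_{22}^{j-r}$ is a genuinely ordered monomial, not $(x_{11}x_{22})^{j-r}$), whereas $x_{12}$ and $x_{21}$ do. Getting the sign of the exponent $q^{-i(j-i)}$ in $\Delta(f^j)$ correct, which hinges on the direction of $k^{-1}f=q^2fk^{-1}$, is the one place where an error would silently propagate, so I would cross-check it against the cases $j=1$ and $j=2$.
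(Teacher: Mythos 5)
Your proof is correct, and the bookkeeping you flag does telescope as claimed: with $r=j-i$, the exponents $-i(j-i)$ (from $\Delta(f^j)$), $-i(j-i)$ twice more (from the two single-column formulas), $-(j-i)^2+i(j-i)$ (from applying $k^{-(j-i)}$ to $x_{21}^{j-i}x_{22}^{i}$) and $+2i(j-i)$ (from the reordering) sum to $-r^2$, while $\qbinom ji\cdot\frac{\q j!}{\q i!}\cdot\frac{\q j!}{\q{j-i}!}=\q j!\,\qbinom jr^2$. The route, however, is genuinely different from the paper's. The paper regards $x_{11}^jx_{21}^j$ as a product of $2j$ single generators, expands $\Delta^{2j-1}(f)^j$ as a sum over $j$-tuples of tensor positions, reduces to strictly increasing tuples (the inversion-number generating function producing the factor $q^{-j(j-1)/2}\q j!$), and then evaluates the surviving sum over pairs of subsets as a product $e_r(1,q^2,\dots,q^{2j-2})\,e_r(1,q^{-2},\dots,q^{-2j+2})$ of elementary symmetric polynomials, converted into $\qbinom jr^2$ via the principal-specialization identity. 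You instead split only into the two column blocks $x_{11}^j$ and $x_{21}^j$, invoke the closed formula for $\Delta(f^j)$ coming from the $q$-commutation $k^{-1}f=q^2fk^{-1}$, and package all the within-block combinatorics into the single-column formula $f^i\cdot x_{11}^a=q^{-i(a-i)}\frac{\q a!}{\q{a-i}!}x_{11}^{a-i}x_{12}^i$. Your version is shorter, produces the $q$-binomial coefficients directly, and avoids both the subset-sum step and the elementary-symmetric-function specialization; the paper's version is more explicit about which of the $(2j)^j$ position tuples survive. Both arguments hinge on the same normalization $(x_{11}x_{21})^j=q^{-j(j-1)/2}x_{11}^jx_{21}^j$, used by you at the start and by the paper at the end.
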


\begin{proof}
For $1\le i\le 2j$, put
\begin{align*}
f_j(i)=\overbrace{1\otimes\dotsb\otimes1}^{i-1}\otimes f\otimes\overbrace{k^{-1}\otimes\dotsb\otimes k^{-1}}^{2j-i}\in\Uq^{\otimes 2j}.
\end{align*}
Then
\begin{align*}
\Delta^{2j-1}(f)=\sum_{i=1}^{2j}f_j(i),
\end{align*}
so that
\begin{align*}
\Delta^{2j-1}(f)^j=\sum_{1\le n_1,\dots,n_j\le2j}f_j(n_1)\dotsb f_j(n_j).
\end{align*}
Since $f_j(m)f_j(n)=q^{-2}f_j(n)f_j(m)$ if $m>n$ and $f^2\cdot x_{11}=f^2\cdot x_{21}=0$,
we have
\begin{align*}
\Delta^{2j-1}(f)^j&=q^{-j(j-1)/2}\q j!
\sum_{1\le n_1<\dotsb<n_j\le2j}f_j(n_1)\dotsb f_j(n_j)+R,
\end{align*}
where $R$ is a certain element in $\Uq^{\otimes 2j}$ such that $R\cdot(x_{11}^jx_{21}^j)=0$.
Here we also use the well-known identity
\begin{align*}
\sum_{\sigma\in\sym j}x^{\inv\sigma}=(1+x)(1+x+x^{2})\dotsb(1+x+\dotsb+x^{j-1})
\end{align*}
with $x=q^{-2}$. Now we consider
\begin{align*}
f_j(n_1)\dotsb f_j(n_j)\cdot(x_{11}^jx_{21}^j)
\end{align*}
for given $n_1,\dots,n_j$ ($1\le n_1<\dotsb<n_j\le2j$).
Suppose that
\begin{align*}
n_1<\dots<n_r\le j<n_{r+1}<\dots<n_j,
\end{align*}
for some $r$ and define $m_1,\dots,m_r$ ($1\le m_1<\dotsb<m_r\le j$) by the condition
\begin{align*}
\ckakko{n_{r+1},n_{r+2},\dots,n_j}\sqcup\ckakko{j+m_1,j+m_2,\dots,j+m_r}=\ckakko{j+1,j+2,\dots,2j}.
\end{align*}
Then we have
\begin{align*}
f_j(n_1)\dotsb f_j(n_j)\cdot(x_{11}^jx_{21}^j)
&=q^{\beta}\cdot \underbrace{x_{11}\dotsb\overset{n_1}{\mathstrut x_{12}}\dotsb\overset{n_r}{\mathstrut x_{12}}\dotsb x_{11}}_j
\cdot \underbrace{x_{22}\dotsb\overset{m_1}{\mathstrut x_{21}}\dotsb\overset{m_r}{\mathstrut x_{21}}\dotsb x_{22}}_j\\
&=q^{\beta+\gamma}\cdot x_{11}^{j-r}\cdot x_{12}^r\cdot x_{22}^{j-r}\cdot x_{21}^r\\
&=q^{\beta+\gamma+r(j-r)}x_{11}^{j-r}x_{22}^{j-r}(x_{12}x_{21})^r,
\end{align*}
where $\beta$ and $\gamma$ are calculated as
\begin{align*}
\beta&=-\ckakko{(2j-n_1)+(2j-n_2)+\dotsb+(2j-n_j)-(1+2+\dotsb+j-1)}+(1+2+\dotsb+j-1)\\
&=\frac{j(j-1)}2-rj+(n_1+\dotsb+n_r)-(m_1+\dotsb+m_r),\\
\gamma&=\ckakko{(j-m_r)+(j-1-m_{r-1})+\dotsb+(j-r+1-m_1)}\\
&\eqsp\qquad-\ckakko{(j-n_r)+(j-1-n_{r-1})+\dotsb+(j-r+1-n_1)}\\
&=(n_1+\dotsb+n_r)-(m_1+\dotsb+m_r).
\end{align*}
Thus we get
\begin{align*}
f_j(n_1)\dotsb f_j(n_j)\cdot(x_{11}^jx_{21}^j)
=q^{-r^2+j(j-1)/2+2(n_1+\dotsb+n_r)-2(m_1+\dotsb+m_r)}x_{11}^{j-r}x_{22}^{j-r}(x_{12}x_{21})^r.
\end{align*}
Using this, we have
\begin{align*}
f^j\cdot(x_{11}^jx_{22}^j)
&=q^{-j(j-1)/2}\q j!\sum_{1\le n_1<\dotsb<n_j\le2j}f_j(n_1)\dotsb f_j(n_j)\cdot(x_{11}^jx_{21}^j)\\
&=\q j!\sum_{r=0}^jq^{-r^2}\sum_{\substack{1\le n_1<\dotsb<n_r\le j\\ 1\le m_1<\dotsb<m_r\le j}}
q^{2(n_1+\dotsb+n_r)}q^{-2(m_1+\dotsb+m_r)}x_{11}^{j-r}x_{22}^{j-r}(x_{12}x_{21})^r\\
&=\q j!\sum_{r=0}^jq^{-r^2}
e_r(1,q^2,\dots,q^{2(j-1)})e_r(1,q^{-2},\dots,q^{-2(j-1)})
x_{11}^{j-r}x_{22}^{j-r}(x_{12}x_{21})^r,
\end{align*}
where $e_r(x_1,x_2,\dots,x_j)$ denotes the $r$-th elementary symmetric polynomial in $x_1,x_2,\dots,x_j$.
Using the identity (see, e.g. \cite{Mac})
\begin{align*}
e_r(1,q^2,\dots,q^{2j-2})=q^{r(j-1)}\qbinom jr
\end{align*}
together with the symmetry $\qbinom[q]jr=\qbinom[q^{-1}]jr$, we obtain
\begin{align*}
f^j\cdot(x_{11}^jx_{21}^j)
&=\q j!\sum_{r=0}^jq^{-r^2}\qbinom jr^{\!2}
x_{11}^{j-r}x_{22}^{j-r}(x_{12}x_{21})^r.
\end{align*}
Since $(x_{11}x_{21})^j=q^{-j(j-1)/2}x_{11}^jx_{21}^j$, we have the desired conclusion.
\end{proof}

It is straightforward to verify the relations
\begin{equation}
\begin{split}
z_1\cdot x_{22}&=x_{22}\cdot(z_1+(q^3-q)z_2),\\
z_2\cdot x_{22}&=q^2x_{22}\cdot z_2.
\end{split}\label{eq:easy_basic}
\end{equation}
Using this, we get the
\begin{lem}\label{lem:x2z}
\begin{equation}
x_{11}^lx_{22}^l=\prod_{r=1}^l\kakko{z_1+(q^{2r-1}-q)z_2}.
\end{equation}
\end{lem}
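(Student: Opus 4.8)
The plan is to argue by induction on $l$, using the two relations \eqref{eq:easy_basic} to push one trailing copy of $x_{22}$ leftward through a product of commuting factors. Abbreviating the $r$-th factor as $z_1+(q^{2r-1}-q)z_2$, the claim reads $x_{11}^lx_{22}^l=\prod_{r=1}^l\kakko{z_1+(q^{2r-1}-q)z_2}$. The base case $l=1$ is immediate, since the single factor at $r=1$ equals $z_1+(q-q)z_2=z_1=x_{11}x_{22}$. For the inductive step I would start from the factorization $x_{11}^{l+1}x_{22}^{l+1}=x_{11}\kakko{x_{11}^lx_{22}^l}x_{22}$, which holds because powers of a single generator commute, and then substitute the induction hypothesis to reduce the problem to computing $x_{11}\kakko{\prod_{r=1}^l\kakko{z_1+(q^{2r-1}-q)z_2}}x_{22}$.

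The heart of the argument is a single-factor commutation rule extracted from \eqref{eq:easy_basic}: for any scalar $c$,
\begin{equation*}
(z_1+cz_2)x_{22}=x_{22}\kakko{z_1+(q^3-q+q^2c)z_2}.
\end{equation*}
Applying this with $c=q^{2r-1}-q$ gives the transformed coefficient $q^3-q+q^2(q^{2r-1}-q)=q^{2r+1}-q=q^{2(r+1)-1}-q$, so that crossing $x_{22}$ past the $r$-th factor turns it into the $(r+1)$-th factor. Because $z_1$ and $z_2$ commute, all these factors are polynomials in two commuting variables and may be freely reordered, so there is no ambiguity in collecting them. Moving $x_{22}$ past all $l$ factors therefore shifts every index up by one, yielding
\begin{equation*}
\kakko{\prod_{r=1}^l\kakko{z_1+(q^{2r-1}-q)z_2}}x_{22}=x_{22}\prod_{r=2}^{l+1}\kakko{z_1+(q^{2r-1}-q)z_2}.
\end{equation*}
Multiplying on the left by $x_{11}$ and using $x_{11}x_{22}=z_1$, which is precisely the missing $r=1$ factor, collapses the right-hand side into $\prod_{r=1}^{l+1}\kakko{z_1+(q^{2r-1}-q)z_2}$, completing the induction.

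The only genuinely delicate point is the bookkeeping of the index shift: one must verify that the coefficient transformation $c\mapsto q^3-q+q^2c$ sends $q^{2r-1}-q$ to exactly $q^{2(r+1)-1}-q$, since it is this clean increment that makes the product close up with the correct new factor at $r=l+1$ while dropping the $r=1$ factor, which is then resupplied by $x_{11}x_{22}=z_1$. The commutativity $z_1z_2=z_2z_1$ is what licenses reordering the intermediate factors into a single monotone product at each stage.
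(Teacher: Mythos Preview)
Your proof is correct and follows essentially the same approach as the paper: both derive from \eqref{eq:easy_basic} the single-factor commutation $(z_1+(q^{2r-1}-q)z_2)x_{22}=x_{22}(z_1+(q^{2r+1}-q)z_2)$ and then finish by induction on $l$. The paper's proof is simply more terse, stating the commutation and leaving the induction to the reader, whereas you spell out the index-shift bookkeeping explicitly.
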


\begin{proof}
By \eqref{eq:easy_basic}, it follows that
\begin{align*}
(z_1+(q^{2r-1}-q)z_2)\cdot x_{22}=x_{22}\cdot(z_1+(q^{2r+1}-q)z_2),
\end{align*}
by which the lemma is proved by induction on $l$.
\end{proof}

\subsection{Irreducible decomposition}

\begin{thm}\label{thm:irred_decomp}
There exists a collection of $\C(q)$-valued functions $F_{m,j}(\alpha)$ $(j=0,1,\dots,m)$ such that
\begin{equation}
\Cmod m\cong\bigoplus_{\substack{0\le j\le m\\ F_{m,j}(\alpha)\ne0}}\SLmod{2j+1}.
\end{equation}
\end{thm}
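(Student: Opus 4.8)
The plan is to decompose the cyclic module by producing an explicit spanning set, identifying the highest-weight vectors it must contain, and then reducing everything to a finite-dimensional linear algebra problem whose ranks define the functions $F_{m,j}(\alpha)$. First I would observe that $\kakko{\qadet}^m=(z_1+\alpha qz_2)^m$ is a weight vector annihilated by $e$ (since $z_1=x_{11}x_{22}$ and $z_2=x_{12}x_{21}$ are each built from the two columns and $e$ kills the first column while $e\cdot x_{i2}=x_{i1}$ rearranges the second), so it is a highest-weight vector of weight $2m$, generating a copy of $\SLmod{2m+1}$ inside $\Cmod m$. The whole module $\Cmod m$ is thus spanned by $f^t\cdot\kakko{\qadet}^m$ for $t\ge0$, and because $\Uq$ is generated by $e,f,k^{\pm1}$ with the standard $\mathfrak{sl}_2$ representation theory over $\C(q)$, every finite-dimensional $\Uq$-module is completely reducible into the $\SLmod{d}$. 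Hence the only issue is \emph{which} irreducibles occur and with what multiplicity.

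Next I would exploit the $\mathfrak{sl}_2$-grading by weight. The vectors $(x_{11}x_{21})^s z_1^{m-s}$ (or equivalently, by Lemma \ref{lem:x2z}, suitable products expressed through $z_1,z_2$) provide a natural family of highest-weight vectors of weights $2s$ for $s=0,1,\dots,m$, each generating $\SLmod{2s+1}$ as recorded in the displayed isomorphism just before the theorem. The strategy is to write the generator's highest weight $\kakko{\qadet}^m$ in the weight-$2m$ space, and then to expand the lower highest-weight vectors that appear in $\Cmod m$ as linear combinations of a fixed basis of the relevant weight spaces. Concretely, I would compute $f^{m-j}\cdot\kakko{\qadet}^m$ using Lemma \ref{lem:f-action} (which gives the action of powers of $f$ on the building block $(x_{11}x_{21})^j$) together with Lemma \ref{lem:x2z} (which rewrites $x_{11}^lx_{22}^l$ as a product in $z_1,z_2$), expand in the commuting monomials $z_1^{a}z_2^{b}$ times powers of $(x_{11}x_{21})$, and extract the coefficient that measures how much of each $\SLmod{2j+1}$ is present. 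Defining $F_{m,j}(\alpha)$ as exactly that coefficient (up to the harmless normalisation the theorem allows), the multiplicity of $\SLmod{2j+1}$ in $\Cmod m$ is $1$ precisely when $F_{m,j}(\alpha)\ne0$ and $0$ otherwise.

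The key structural point, which I would verify to conclude, is that each irreducible type $\SLmod{2j+1}$ occurs with multiplicity \emph{at most one} in $\Cmod m$. This should follow because $\Cmod m$ is a cyclic module generated by a single highest-weight vector of weight $2m$: applying $f$ produces at most a one-dimensional image in each lower weight space that is reachable from the cyclic generator, so the space of highest-weight vectors of each weight $2j$ inside $\Cmod m$ is at most one-dimensional. Granting this, the decomposition is multiplicity-free and the sum over $j$ with $F_{m,j}(\alpha)\ne0$ is forced. I expect the main obstacle to be the bookkeeping in the second step: namely, isolating the genuine highest-weight content at weight $2j$ from $f^{m-j}\cdot\kakko{\qadet}^m$, since that vector is not itself highest-weight but a mixture of the highest-weight vector of $\SLmod{2j+1}$ with $f$-descendants of the higher $\SLmod{2j'+1}$ ($j'>j$). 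Disentangling these requires either projecting onto the kernel of $e$ in each weight space or, more cleanly, tracking a single distinguished monomial (such as the pure $(x_{11}x_{21})^j z_1^{\text{power}}$ term) whose coefficient detects $\SLmod{2j+1}$ unambiguously; carrying out this extraction and showing the resulting coefficient is a well-defined nonzero-testable function of $\alpha$ is where the real work lies, and it is precisely this coefficient that Theorem \ref{thm:quantum_main} will later identify with the $q$-deformed hypergeometric polynomial.
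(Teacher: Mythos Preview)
Your proposal rests on a false premise: $(\qadet)^m = (z_1 + \alpha q z_2)^m$ is \emph{not} annihilated by $e$, nor does it have weight $2m$. A direct computation with the coproduct $\Delta(e)=e\otimes1+k\otimes e$ gives $e \cdot z_1 = q\,x_{11}x_{21}$ and $e \cdot z_2 = x_{11}x_{21}$, so $e \cdot \qadet = q(1+\alpha)\,x_{11}x_{21}$, which is nonzero for generic $\alpha$. Moreover $k \cdot z_1 = z_1$ and $k \cdot z_2 = z_2$, so the generator sits in weight $0$, not weight $2m$. Consequently $\Cmod m$ is \emph{not} spanned by $\{f^t \cdot (\qadet)^m : t \ge 0\}$, and the scheme of descending from weight $2m$ to weight $2j$ via $f^{m-j}$ and then isolating highest-weight content never gets off the ground. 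Your multiplicity-one argument also collapses, since it relied on the generator being highest-weight.

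The paper's argument sidesteps all of this by working entirely inside the weight-$0$ space. It observes that both $(\qadet)^m$ and the vectors
\[
v_{m,j} := \bigl(f^j \cdot (x_{11}x_{21})^j\bigr)\,\qdet^{\,m-j}\qquad(j=0,1,\dots,m)
\]
are homogeneous polynomials of degree $m$ in the commuting variables $z_1, z_2$ (this is the content of Lemma~\ref{lem:f-action}), hence all lie in the same $(m{+}1)$-dimensional space. The $v_{m,j}$ are linearly independent because they are distinguished by the least $p$ with $e^{p+1}\cdot v_{m,j}=0$, so they form a basis, and one simply expands $(\qadet)^m = \sum_j F_{m,j}(\alpha)\, v_{m,j}$. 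Since $\Uq \cdot v_{m,j} \cong \SLmod{2j+1}$, the decomposition follows at once, with multiplicity-freeness automatic because each $\SLmod{2j+1}$ contributes a one-dimensional weight-$0$ space.
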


\begin{proof}
Notice that
\begin{align}
\kakko{\qadet}^m=\sum_{j=0}^m \binom mj(\alpha q)^jz_1^{m-j}z_2^j
\end{align}
is a homogeneous polynomial of degree $m$ in the commuting variables $z_1$ and $z_2$.
On the other hand, it is also clear that the vectors
\begin{align*}
v_{m,j}=\kakko{f^j\cdot(x_{11}x_{21})^j}\qdet^{m-j}\qquad(j=0,1,\dots,m)
\end{align*}
are linearly independent (since $e^j\cdot v_{m,j}\ne0$ and $e^{j+1}\cdot v_{m,j}=0$),
and they are homogeneous polynomials of degree $m$ in $z_1$ and $z_2$ by Lemma \ref{lem:f-action}.
Therefore, $\{v_{m,j}\}_{j=0}^m$ form a basis of the space consisting of
the homogeneous polynomials of degree $m$ in $z_1$ and $z_2$,
so that there exist $\C(q)$-valued functions $F_{m,j}(\alpha)$ such that
\begin{equation}\label{eq:expansion}
\kakko{\qadet}^m=\sum_{j=0}^m F_{m,j}(\alpha)v_{m,j}.
\end{equation}
Since $\Uq\cdot v_{m,j}\cong\SLmod{2j+1}$, this proves the theorem.
\end{proof}

The conditions for the functions $F_{m,j}(\alpha)$ are described in terms of polynomials in $\C(q)[z_1,z_2]$
by virtue of Lemmas \ref{lem:f-action} and \ref{lem:x2z}.
Since $z_1$ and $z_2$ commute, it is meaningful to consider the specialization $z_1=z$, $z_2=1$,
where $z$ is a new indeterminate.
Put
\begin{align}
g_j(z)&=\prod_{i=1}^j(z+q^{2i-1}-q),\\
v_j(z)&=q^{-j(j-1)/2}\q{j}!\sum_{r=0}^j q^{-r^2}\qbinom jr^2 g_{j-r}(z).
\end{align}
Then \eqref{eq:expansion} together with Lemmas \ref{lem:f-action} and \ref{lem:x2z} yields
\begin{equation}\label{eq:base_formula}
(z+q\alpha)^m=\sum_{j=0}^m F_{m,j}(\alpha)v_j(z)(z-q)^{m-j}.
\end{equation}
If we take the $l$-th derivative of this formula with respect to $z$ ($l=0,1,\dots,m$), then we have
\begin{align*}
\frac{m!}{(m-l)!}(z+q\alpha)^{m-l}=\sum_{j=0}^m F_{m,j}(\alpha)\sum_{t=0}^l\binom lt v_j^{(l-t)}(z)\frac{(m-j)!}{(m-j-t)!}(z-q)^{m-j-t}.
\end{align*}
Letting $z=q$, we get the relation
\begin{equation}\label{eq:relation_among_F}
\binom mlq^{m-l}(1+\alpha)^{m-l}
=\sum_{j=m-l}^m F_{m,j}(\alpha)\frac{v_j^{(l-m+j)}(q)}{(l-m+j)!}
=\sum_{s=0}^l F_{m,m-s}(\alpha)\frac{v_{m-s}^{(l-s)}(q)}{(l-s)!}.
\end{equation}

Now we calculate $v_j^{(i)}(q)/i!$.
By the $q$-binomial theorem \eqref{eq:q-binom-thm}, we have
\begin{align*}
g_j(z)=\sum_{i=0}^j q^{j(j-i)}\qbinom ji(z-q)^i.
\end{align*}
Hence it follows that
\begin{align*}
v_j(z)&=q^{-j(j-1)/2}\q{j}!\sum_{r=0}^j q^{-r^2}\qbinom jr^2 g_{j-r}(z)\\
&=q^{j(j+1)/2}\q{j}!\sum_{i=0}^jq^{-ij}\ckakko{\sum_{r=0}^{j-i}q^{r(i-2j)}\qbinom jr^2\qbinom{j-r}i}(z-q)^i\\
&=q^{j(j+1)/2}\q{j}!\sum_{i=0}^jq^{-ij}\ckakko{\qbinom ji\sum_{r=0}^{j-i}q^{r(i-2j)}\qbinom j{j-r}\qbinom{j-i}r}(z-q)^i.
\end{align*}
Using the $q$-Chu-Vandermonde formula \eqref{eq:q-CV}, we get
\begin{align*}
\sum_{r=0}^{j-i}q^{r(i-2j)}\qbinom j{j-r}\qbinom{j-i}r=q^{j(i-j)}\qbinom{2j-i}j.
\end{align*}
This provides
\begin{equation}
v_j(z)=q^{-j(j-1)/2}\sum_{i=0}^j\frac{\q j!\q{2j-i}!}{\q i!\q{j-i}!^2}(z-q)^i,
\end{equation}
or
\begin{equation}
\frac{v_j^{(i)}(q)}{i!}=q^{-\binom j2}\frac{\q j!\q{2j-i}!}{\q i!\q{j-i}!^2}.
\end{equation}
Thus the formula \eqref{eq:relation_among_F} is rewritten more explicitly as
\begin{equation}\label{eq:relation_among_F2}
\q{m-l}!^2\binom mlq^{m-l}(1+\alpha)^{m-l}
=\sum_{s=0}^l q^{-\binom{m-s}2}\frac{\q{m-s}!\q{2m-l-s}!}{\q{l-s}!}F_{m,m-s}(\alpha).
\end{equation}

\subsection{Expression of $F_{m,j}(\alpha)$ in terms of mixed hypergeometric polynomials}

From \eqref{eq:relation_among_F} (or \eqref{eq:relation_among_F2}), we can conclude that
$F_{m,j}(\alpha)$ is a \emph{polynomial function} in $\alpha$
which is divisible by $(1+\alpha)^{j}$, that is
\begin{align}\label{eq:common_factor}
F_{m,j}(\alpha)=(1+\alpha)^jQ_{m,j}(\alpha)
\end{align}
for some $Q_{m,j}(\alpha)\in\C(q)[\alpha]$.
By \eqref{eq:common_factor} and \eqref{eq:relation_among_F2}, we have
\begin{equation}\label{eq:key-relation}
\qbinom{2m-2i}{m-i}^{-1}\binom miq^{m-i}
=\sum_{j=0}^i\qbinom{2i-2m-1}{i-j}
(-1)^{i-j}(1+\alpha)^{i-j}\cdot q^{-\binom{m-j}2}\q{m-j}!Q_{m,m-j}(\alpha).
\end{equation}
To solve this,
we need the following lemma.
\begin{lem}
\begin{equation}\label{eq:inverse}
\kakko{\qbinom{2i-2m-1}{i-j}}_{0\le i,j\le m}^{\!-1}
=\kakko{\frac{\q{2m-2i+1}}{\q{2m-2j+1}}\qbinom{2m-2j+1}{i-j}}_{0\le i,j\le m}.
\end{equation}
\end{lem}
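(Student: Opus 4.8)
The statement to prove is that the lower-triangular matrix
\[
M=\kakko{\qbinom{2i-2m-1}{i-j}}_{0\le i,j\le m}
\]
has inverse $N=\kakko{\frac{\q{2m-2i+1}}{\q{2m-2j+1}}\qbinom{2m-2j+1}{i-j}}_{0\le i,j\le m}$. Both matrices are lower triangular with unit diagonal (when $i=j$ the $q$-binomial coefficient $\qbinom{\cdot}{0}=1$, and the prefactor in $N$ is $1$), so the inverse exists and is unique, and it suffices to verify $MN=I$ entrywise.

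The plan is to compute the $(i,k)$ entry of the product and show it equals $\delta_{ik}$. Writing the sum over the intermediate index $j$, and using that both matrices are lower triangular so the only surviving terms satisfy $k\le j\le i$, I would obtain
\[
(MN)_{ik}=\sum_{j=k}^i \qbinom{2i-2m-1}{i-j}\,\frac{\q{2m-2j+1}}{\q{2m-2k+1}}\qbinom{2m-2k+1}{j-k}.
\]
The factor $\frac{\q{2m-2j+1}}{\q{2m-2k+1}}$ should be absorbed into the $q$-binomial coefficients to bring the sum into a recognizable hypergeometric-type convolution. The natural tool is the $q$-Chu--Vandermonde formula \eqref{eq:q-CV}, which the paper has already invoked in the computation of $v_j(z)$; the whole point of the identity \eqref{eq:inverse} is that it is the inverse relation underlying the same Vandermonde-type summation.

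Concretely, I would reindex the sum by setting $r=j-k$ (so $r$ runs from $0$ to $i-k$) and shift all upper arguments so that the summand becomes a product of two $q$-binomial coefficients matching the left-hand side of \eqref{eq:q-CV}, with the $q$-power weight $q^{-r(x+y)}$ produced by carefully tracking the negative upper arguments $2i-2m-1$ and $2m-2k+1$. The reflection identity $\qbinom{-N}{k}=(-1)^k q^{-k(N+k)}\cdots$ for $q$-binomials with negative top entry, together with the symmetry $\qbinom nk=\qbinom n{n-k}$, lets me rewrite $\qbinom{2i-2m-1}{i-j}$ so that the summation variable appears in both lower indices in the form demanded by \eqref{eq:q-CV}. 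After the Vandermonde evaluation the closed form collapses to a single $q$-binomial $\qbinom{2i-2k-\text{(const)}}{i-k}$ with an argument that vanishes unless $i=k$, yielding $\delta_{ik}$ as required; when $i=k$ only the $r=0$ term survives and gives $1$ directly.

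The main obstacle will be the bookkeeping of $q$-powers: the entries carry negative upper arguments, so applying \eqref{eq:q-CV} requires first converting each $\qbinom{2i-2m-1}{i-j}$ into a standard (positive-top) $q$-binomial via the negation rule, which introduces sign and $q$-exponent factors that must cancel precisely against the $\frac{\q{2m-2j+1}}{\q{2m-2k+1}}$ prefactor and the weight $q^{-r(x+y)}$ in the Vandermonde sum. Getting these exponents to match exactly — rather than off by a power of $q$ — is the delicate step; everything else is a routine, if lengthy, manipulation of $q$-factorials. Once the exponents line up, the identity is an immediate consequence of \eqref{eq:q-CV}.
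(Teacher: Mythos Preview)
Your approach is genuinely different from the paper's, and as written it has a real gap. The paper does \emph{not} reduce the identity to $q$-Chu--Vandermonde. Instead, after the same reduction you make, it sets $d=i-k$ and shows that
\[
f(x)=\sum_{r=0}^d\q{x-2r}\qbinom{2d-x}{d-r}\qbinom{x}{r}
\]
vanishes identically in $x$ (the desired sum is $f(2m-2k+1)/\q{2m-2k+1}$). This is done by a degree/interpolation argument: $f$ is a rational function in $q^x$ whose numerator has degree at most $2d$; one checks the antisymmetry $f(x)+f(2d-x)=0$ and verifies $f(l)=0$ for $l=0,\dots,d-1$ directly from the vanishing of $\qbinom{l}{r}$ for $r>l$, which gives enough zeros. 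No hypergeometric summation is used.

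The flaw in your plan is the step where the prefactor $\dfrac{\q{2m-2j+1}}{\q{2m-2k+1}}$ is ``absorbed'' into the $q$-binomials so that a single application of \eqref{eq:q-CV} applies. That prefactor is a ratio of $q$-\emph{integers}, i.e.\ a difference of two powers of $q$, not a single power $q^{-r(x+y)}$; it cannot be made into the weight required by \eqref{eq:q-CV}. (Relatedly, in the paper's symmetric convention the negation rule is $\qbinom{-a}{k}=(-1)^k\qbinom{a+k-1}{k}$ with \emph{no} extra $q$-power, so the factor $q^{-k(N+k)}$ you invoke is not available here.) If you simply drop the prefactor and apply \eqref{eq:q-CV} with $x=2i-2m-1$, $y=2m-2k+1$, the sum evaluates to a multiple of $\qbinom{2(i-k)}{\,i-k\,}$, which is nonzero for $i>k$; so the prefactor is essential and the single-Vandermonde route cannot ``collapse to a single $q$-binomial with an argument that vanishes unless $i=k$.'' One can salvage a hypergeometric proof by writing $\q{N-2r}=q^{-r}\q{N-r}-q^{-(N-r)}\q{r}$, which splits the sum into \emph{two} Vandermonde-type sums that must then be shown to cancel, but that is a different (and longer) argument than the one you outline.
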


\begin{proof}
We should prove
\begin{equation}\label{eq:to-be-proved}
\sum_{k=j}^i\qbinom{2i-2m-1}{i-k}\cdot\frac{\q{2m-2k+1}}{\q{2m-2j+1}}\qbinom{2m-2j}{k-j}=\delta_{ij}
\end{equation}
for $0\le j\le i\le m$ since the matrices in \eqref{eq:inverse} are lower triangular.
The case where $i=j$ is clear. Assume that $i>j$.
By putting $d=i-j$, $n=m-j$ and changing the running index by $r=k-j$,
\eqref{eq:to-be-proved} is reduced to
\begin{equation}
\sum_{r=0}^d\q{2n+1-2r}\qbinom{2d-(2n+1)}{d-r}\qbinom{2n+1}r=0\quad(0<d\le n).
\end{equation}
To prove this, it suffices to show that the function
\begin{align*}
f(x)=\sum_{r=0}^d\q{x-2r}\qbinom{2d-x}{d-r}\qbinom{x}r
\end{align*}
is constant, which is readily seen to be zero.
Notice that $f(x)$ is a rational function in $z=q^x$, and its numerator is a polynomial in $z$ of degree at most $2d$.
Hence it is enough to verify that $f(l)=0$ for $l=0,1,\dots,2d$.
However, since we easily see that $f(x)+f(2d-x)=0$, which also implies $f(d)=0$,
we have only to check $f(l)=0$ for $l=0,1,\dots,d-1$.

Let $l\in\Z$ such that $0\le l<d$. Then we have
\begin{align*}
f(l)&=\sum_{r=0}^l\q{l-2r}\qbinom{2d-l}{d-r}\qbinom lr\quad(\because \qbinom lr=0 \text{ if } l<r\le d)\\
&=\sum_{s=0}^l\q{l-2(l-s)}\qbinom{2d-l}{d-(l-s)}\qbinom l{l-s}\\
&=\sum_{s=0}^l\q{-(l-2s)}\qbinom{2d-l}{d-s}\qbinom ls=-f(l),
\end{align*}
which means $f(l)=0$.
This completes the proof of the lemma.
\end{proof}

Now we define the \emph{mixed hypergeometric series} by
\begin{equation}\label{eq:mixedHG}
\mixhgf{a_1,\dots,a_k}{b_1,\dots,b_l}{c_1,\dots,c_m}{d_1,\dots,d_n}{x}
=\sum_{i=0}^\infty\frac{\ps{a_1}i\dotsb\ps{a_k}i}{\ps{b_1}i\dots\ps{b_l}i}\frac{\qps{c_1}i\dotsb\qps{c_m}i}{\qps{d_1}i\dotsb\qps{d_m}i}\frac{x^i}{\q i!},
\end{equation}
where $\ps ai=a(a+1)\dotsb(a+i-1)$ and $\qps ai=\q{a}\q{a+1}\dotsb\q{a+i-1}$ (cf. \cite{KK1989AMV}).

\begin{thm}\label{thm:quantum_main}
For $s=0,1,\dots,m$,
\begin{equation}
F_{m,s}(\alpha)=q^{\binom{s+1}2}\binom ms\frac{\q{s}!}{\q{2s}!}
(1+\alpha)^s\,\mixhgf{s-m}{s+1}{s+1,s+1}{2s+2}{q(1+\alpha)}
\end{equation}
holds.
\end{thm}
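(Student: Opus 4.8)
The plan is to solve the explicit linear recursion \eqref{eq:key-relation} for the polynomials $Q_{m,s}(\alpha)$ by inverting the triangular coefficient matrix, and then to recognize the resulting sum as the mixed hypergeometric polynomial in the statement. The inversion is exactly what the preceding lemma is designed for: writing \eqref{eq:key-relation} as a lower-triangular system $A\cdot w=b$ with $A=\bigl(\qbinom{2i-2m-1}{i-j}\bigr)$ and unknowns $w_j=(-1)^{i-j}$-weighted multiples of $q^{-\binom{m-j}2}\q{m-j}!Q_{m,m-j}(\alpha)$, I would multiply by $A^{-1}$ as given in \eqref{eq:inverse}. Concretely, first I would substitute \eqref{eq:common_factor} into \eqref{eq:relation_among_F2} to isolate $Q$, confirm the claimed lower-triangular shape and that the $(1+\alpha)^{i-j}$ factors appear, and thereby reduce the whole problem to a single explicit summation over one index.

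Next I would carry out the matrix inversion to obtain a closed expression
\begin{align*}
q^{-\binom{m-s}2}\q{m-s}!\,Q_{m,m-s}(\alpha)
=\sum_{i}\frac{\q{2m-2i+1}}{\q{2m-2s+1}}\qbinom{2m-2s+1}{i-s}\,
\qbinom{2m-2i}{m-i}^{-1}\binom mi q^{m-i}(-1)^{?}(1+\alpha)^{?},
\end{align*}
where the exponents and signs are to be tracked carefully from the bookkeeping above. After reindexing (replacing $s$ by $m-s$ and shifting the summation variable so the sum starts at $0$), I would collect all the $q$-factorials and ordinary binomials into the two families of Pochhammer-type symbols $\ps{\cdot}i$ and $\qps{\cdot}i$ that define the mixed hypergeometric series in \eqref{eq:mixedHG}. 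The target parameters $s-m$, $s+1$ (classical numerator and denominator) and $s+1,s+1$ over $2s+2$ ($q$-numerator and denominator) dictate precisely which groupings to make, and the variable should emerge as $q(1+\alpha)$; I would verify the ratio of consecutive summands against the defining coefficient $\frac{\ps{s-m}i\,\qps{s+1}i\qps{s+1}i}{\ps{s+1}i\,\qps{2s+2}i\,\q i!}$ to pin down the match term by term. Finally I would read off the prefactor $q^{\binom{s+1}2}\binom ms\frac{\q s!}{\q{2s}!}$ by comparing the $i=0$ terms and restoring the $(1+\alpha)^s$ from \eqref{eq:common_factor}.

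The main obstacle I anticipate is neither the inversion nor the recognition step individually but the \emph{exponent and sign bookkeeping} that connects them: \eqref{eq:key-relation} carries several $q$-powers ($q^{m-i}$, $q^{-\binom{m-j}2}$) and the factor $(-1)^{i-j}(1+\alpha)^{i-j}$, and the inverse matrix in \eqref{eq:inverse} introduces further $q$-binomials and a ratio $\q{2m-2i+1}/\q{2m-2s+1}$. Keeping these consistent while converting between $q$-binomials, $q$-factorials, and the symbols $\ps{\cdot}i$, $\qps{\cdot}i$ requires care, since a single misplaced $q^{\binom{\cdot}2}$ would corrupt the prefactor. A useful internal check will be the symmetry $\qbinom[q]jr=\qbinom[q^{-1}]jr$ already invoked in Lemma \ref{lem:f-action}, together with the degree count: $Q_{m,s}(\alpha)$ must be a polynomial of degree $m-s$ in $\alpha$, which matches the terminating series $\mixhgf{s-m}{s+1}{s+1,s+1}{2s+2}{q(1+\alpha)}$ since $\ps{s-m}i$ vanishes for $i>m-s$. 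Confirming both the polynomiality (hence termination) and the degree before chasing constants will guard against algebraic slips.
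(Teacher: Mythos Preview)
Your plan is correct and follows the paper's own proof essentially step for step: invert the lower-triangular system \eqref{eq:key-relation} using the lemma \eqref{eq:inverse} (absorbing the $(-1)^{i-j}(1+\alpha)^{i-j}$ factor by a diagonal rescaling), then rewrite the resulting single sum via $\ps{\cdot}r$ and $\qps{\cdot}r$ to match \eqref{eq:mixedHG}, and finally reinstate $(1+\alpha)^s$ from \eqref{eq:common_factor}. The only thing left is the bookkeeping you flag, which the paper carries out exactly as you anticipate.
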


\begin{proof}
By \eqref{eq:key-relation} and \eqref{eq:inverse}, we have
\begin{align*}
Q_{m,m-i}(\alpha)
&=\frac{q^{\binom{m-i}2}}{\q{m-i}!}
\sum_{j=0}^i(-1)^{i-j}q^{m-j}\frac{\q{2m-2i+1}}{\q{2m-2j+1}}\qbinom{2m-2j+1}{i-j}
\qbinom{2m-2j}{m-j}^{-1}\binom mj(1+\alpha)^{i-j}\\
&=\frac{q^{\binom{m-i+1}2}m!\q{2m-2i+1}}{\q{m-i}!}
\sum_{r=0}^i(-q)^r\frac{\q{m-i+r}!^2}{(m-i+r)!(i-r)!\q{2m-2i+r+1}!}\frac{(1+\alpha)^r}{\q{r}!}.
\end{align*}
Since
\begin{align*}
(i-r)!=(-1)^r\frac{i!}{\ps{-i}r},\qquad
(n+r)!=n!\ps{n+1}r,\qquad
\q{n+r}!=\q{n}!\qps{n+1}r,
\end{align*}
we have
\begin{equation}
\begin{split}
Q_{m,m-i}(\alpha)
&=\frac{q^{\binom{m-i+1}2}m!\q{m-i}!}{i!(m-i)!\q{2m-2i}!}
\sum_{r=0}^i\frac{\ps{-i}r\qps{m-i+1}r^2}{\ps{m-i+1}r\qps{2m-2i+2}r}\frac{(q(1+\alpha))^r}{\q{r}!}\\
&=q^{\binom{m-i+1}2}\binom mi\frac{\q{m-i}!}{\q{2m-2i}!}
\mixhgf{-i}{m-i+1}{m-i+1,m-i+1}{2m-2i+2}{q(1+\alpha)}.
\end{split}\label{eq:formula_for_Q}
\end{equation}
If we substitute this into \eqref{eq:common_factor} and replace $m-i$ by $s$,
then we have the conclusion.
\end{proof}

\begin{remark}
The function $\Phi$ given by \eqref{eq:mixedHG} satisfies the difference-differential equation
\begin{align*}
&\Bigl\{-(E+a_1)\dotsb(E+a_k)\q{E+c_1}\dotsb\q{E+c_m}\\
&\qquad{}+\partial_q(E+b_1-1)\dotsb(E+b_l-1)\q{E+d_1-1}\dotsb\q{E+d_n-1}\Bigr\}\Phi=0,
\end{align*}
where we put
\begin{align*}
E=x\frac d{dx},\quad
\q{E+a}=\frac{q^{E+a}-q^{-E-a}}{q-q^{-1}},\quad
\partial_q f(x)=\frac{f(qx)-f(q^{-1}x)}{qx-q^{-1}x}.
\end{align*}
If we take a limit $q\to1$, then the difference-differential equation above becomes
a hypergeometric differential equation for $\genhgf{k+m}{l+n}{a_1,\dots,c_m}{b_1,\dots,d_n}x$.
\end{remark}

\subsection{Classical case}

All the discussion above also work in the classical case (i.e. the case where $q=1$).
Thus, by taking a limit $q\to1$ in Theorem \ref{thm:quantum_main},
we will obtain Theorem 4.1 in \cite{KMW:TAMS} again.
We abuse the same notations used in the discussion of quantum case above to indicate the classical counterparts.
From \eqref{eq:formula_for_Q}, we have
\begin{equation}
\begin{split}
Q_{m,s}(\alpha)
&=\frac{m!}{(m-s)!(2s)!}\,\genhgf32{s-m,s+1,s+1}{s+1,2s+2}{1+\alpha}\\
&=\frac{m!}{(m-s)!(2s)!}\,\genhgf21{s-m,s+1}{2s+1}{1+\alpha}.
\end{split}
\end{equation}
Notice that
\begin{equation}
\genhgf21{s-m,s+1}{2s+2}{1-x}=\frac{m!(2s+1)!}{s!(m+s+1)!}\genhgf21{s-m,s+1}{-m}{x}.
\end{equation}
Thus we also get
\begin{equation}
Q_{m,s}(\alpha)=
\frac{m!^2(2s+1)}{(m-s)!s!(m+s+1)!}\,
\genhgf21{s-m,s+1}{-m}{-\alpha}\qquad(s=0,1,\dots,m).
\end{equation}
Summarizing these, we have the
\begin{cor}[Classical case]\label{cor:classical}
\begin{equation}
\begin{split}
F_{m,s}(\alpha)
&=\frac{m!}{(m-s)!(2s)!}(1+\alpha)^s\,\genhgf21{s-m,s+1}{2s+1}{1+\alpha}\\
&=\frac{\binom{2m}{m-s}-\binom{2m}{m-s-1}}{\binom{2m}m s!}
(1+\alpha)^s\,\genhgf21{s-m,s+1}{-m}{-\alpha}
\end{split}\label{eq:classical_again}
\end{equation}
for $s=0,1,\dots,m$.
\qed
\end{cor}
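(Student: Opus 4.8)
The plan is to obtain Corollary \ref{cor:classical} by letting $q\to1$ in the explicit expression \eqref{eq:formula_for_Q} for $Q_{m,i}(\alpha)$ (equivalently, in Theorem \ref{thm:quantum_main}). For each fixed $m$ the transition functions $F_{m,s}(\alpha)=(1+\alpha)^sQ_{m,s}(\alpha)$ are polynomials in $\alpha$ of degree at most $m$ whose coefficients are rational functions of $q$, and every $q$-factorial and $q$-integer appearing in a denominator of \eqref{eq:formula_for_Q} specialises to a nonzero integer at $q=1$; hence these coefficients are regular at $q=1$. Since the whole derivation — Lemmas \ref{lem:f-action} and \ref{lem:x2z}, the expansion \eqref{eq:expansion}, and its solution — is the $q$-deformation of the corresponding classical expansion, the classical transition functions are exactly the specialisations at $q=1$, so I may pass to the limit termwise inside the finite sum in \eqref{eq:formula_for_Q}.

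As $q\to1$, each $q$-Pochhammer symbol $\qps ai$ tends to the ordinary Pochhammer symbol $\ps ai$, each $\q n!$ to $n!$, the prefactor $q^{\binom{m-i+1}2}$ to $1$, and the argument $q(1+\alpha)$ to $1+\alpha$; in particular the mixed hypergeometric series \eqref{eq:mixedHG} degenerates to an ordinary hypergeometric series, as anticipated in the Remark following Theorem \ref{thm:quantum_main}. Relabelling $m-i$ as $s$, formula \eqref{eq:formula_for_Q} therefore becomes
\[
Q_{m,s}(\alpha)=\frac{m!}{(m-s)!(2s)!}\,\genhgf32{s-m,s+1,s+1}{s+1,2s+2}{1+\alpha}.
\]
Cancelling the parameter $s+1$ common to the numerator and the denominator reduces the ${}_3F_2$ to a ${}_2F_1$, and multiplying by the factor $(1+\alpha)^s$ from \eqref{eq:common_factor} yields the first expression for $F_{m,s}(\alpha)$ in \eqref{eq:classical_again}.

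For the second expression I would invoke the transformation of terminating Gauss series
\[
\genhgf21{s-m,s+1}{2s+2}{1-x}=\frac{m!(2s+1)!}{s!(m+s+1)!}\,\genhgf21{s-m,s+1}{-m}{x},
\]
applied with $x=-\alpha$ (so that $1-x=1+\alpha$). This is the instance $n=m-s$, $b=s+1$, $c=2s+2$ of the standard relation
\[
\genhgf21{-n,b}{c}{z}=\frac{\ps{c-b}{n}}{\ps{c}{n}}\,\genhgf21{-n,b}{1+b-c-n}{1-z},
\]
for which one checks $1+b-c-n=-m$ and $\ps{c-b}{n}/\ps{c}{n}=m!(2s+1)!/\bigl(s!(m+s+1)!\bigr)$. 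Substituting and consolidating the two constants, the elementary factorial identity
\[
\frac{m!}{(m-s)!(2s)!}\cdot\frac{m!(2s+1)!}{s!(m+s+1)!}=\frac{\binom{2m}{m-s}-\binom{2m}{m-s-1}}{\binom{2m}m\,s!},
\]
which follows from $\binom{2m}{m-s}-\binom{2m}{m-s-1}=(2m)!(2s+1)/\bigl((m-s)!(m+s+1)!\bigr)$, converts the first form into the second.

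The only genuine ingredient is the terminating-series transformation relating the values at $1-x$ and at $x$: unlike the Pfaff and Euler transformations it connects solutions of the Gauss equation at its two distinct finite singularities, so the cleanest justification is either to cite it from the standard tables or to recognise both sides as normalisations of a single Jacobi polynomial and use the reflection symmetry $P_n^{(\alpha,\beta)}(-t)=(-1)^nP_n^{(\beta,\alpha)}(t)$. The termwise limit, the cancellation of the repeated parameter, and the factorial bookkeeping are all routine.
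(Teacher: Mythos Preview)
Your proposal is correct and follows essentially the same route as the paper: specialise \eqref{eq:formula_for_Q} at $q=1$, cancel the repeated parameter $s+1$ to pass from ${}_3F_2$ to ${}_2F_1$, then apply the terminating transformation \eqref{eq:classical_again} (your displayed identity with $c=2s+2$) and simplify the factorial constants. You in fact supply more justification than the paper does, both for the transformation formula and for the binomial bookkeeping.
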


\begin{ackn}
The author would like to thank Max-Planck-Institut f\"ur Mathematik for the support and hospitality.
\end{ackn}


\bigskip
\noindent
Department of Mathematical Sciences, University of the Ryukyus\\
1 Senbaru, Nishihara-cho, Okinawa 903-0213 Japan

\smallskip
\noindent
\texttt{kimoto@math.u-ryukyu.ac.jp}

\smallskip
\noindent
Max-Planck-Institut f\"ur Mathematik\\
Vivatsgasse 7, 53111 Bonn, Germany

\smallskip
\noindent
\texttt{kimoto@mpim-bonn.mpg.de}

\end{document}